\newtheorem{theorem}{Theorem}[section]
\newtheorem{prop}[theorem]{Proposition}
\newtheorem{cor}[theorem]{Corollary}
\theoremstyle{definition}
\newtheorem{definition}[theorem]{Definition}
\newtheorem{rmk}[theorem]{Remark}
    \def\HMt{%
       \setbox0=\hbox{$\widehat{\mathit{HM}}$}
       \setbox1=\hbox{$\mathit{HM}$}
       \dimen0=1.1\ht0
       \advance\dimen0 by 1.17\ht1
       \smash{\mskip2mu\raise\dimen0\rlap{%
          \begin{turn}{180}
              {$\widehat{\phantom{\mathit{HM}}}$}
           \end{turn}} \mskip-2mu    
                \mathit{HM}
    }{\vphantom{\widehat{\mathit{HM}}}}{}}
    \def\HMt{%
       \setbox0=\hbox{$\widehat{\mathit{HM}}$}
       \setbox1=\hbox{$\mathit{HM}$}
       \dimen0=1.1\ht0
       \advance\dimen0 by 1.17\ht1
       \smash{\mskip2mu\raise\dimen0\rlap{%
          \begin{turn}{180}
              {$\widehat{\phantom{\mathit{HM}}}$}
           \end{turn}} \mskip-2mu    
                \mathit{HM}
    }{\vphantom{\widehat{\mathit{HM}}}}{}}
    \newcommand{\HMf}{\widehat{\mathit{HM}}}
    \newcommand{\HMb}{\overline{\mathit{HM}}}
     \newcommand{\HMr}{{\mathit{HM}}}
    \newcommand{\spin}{\mathfrak{s}}
     \newcommand{\m}{\mathfrak{m}}
    \newcommand{\Z}{\mathbb{Z}}
\begin{document}
\title{Hyperbolic four-manifolds with vanishing Seiberg-Witten invariants}
\author[Ian Agol]{%
        Ian Agol} 
\address{%
    University of California, Berkeley \\
    970 Evans Hall \#3840 \\
    Berkeley, CA 94720-3840} 
\email{%
     ianagol@math.berkeley.edu}  
 \author[Francesco Lin]{%
        Francesco Lin} 
\address{%
    Princeton University, Department of Mathematics \\
    Fine Hall, Washington Road\\
Princeton NJ 08544-1000 } 
\email{%
     fl4@math.princeton.edu}  

\begin{abstract} 
We show the existence of hyperbolic 4-manifolds with vanishing Seiberg-Witten
invariants, addressing a conjecture of Claude LeBrun. This is achieved by showing, using results in geometric and arithmetic group theory, that certain hyperbolic $4$-manifolds contain $L$-spaces as hypersurfaces. 
\end{abstract}

\maketitle

\section{Introduction}
In \cite[Conjecture $1.1$]{LeBrun2002}, Claude LeBrun asked whether the Seiberg-Witten invariants of hyperbolic 4-manifolds vanish. This question stems from his result that for a hyperbolic $4$-manifold, Seiberg-Witten basic classes satisfy much stronger constraints than one would expect;  furthermore, it turns out to be related to several problems in low-dimensional topology \cite[\textsection 4]{Reid06}. Here, we show that there exist certain hyperbolic 4-manifolds with vanishing Seiberg-Witten invariants. 

\begin{theorem}\label{main}
There exist closed arithmetic hyperbolic 4-manifolds  with vanishing Seiberg-Witten invariants. 
\end{theorem}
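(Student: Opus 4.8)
The plan is to establish the vanishing by exhibiting, inside a closed arithmetic hyperbolic four-manifold, an embedded separating hypersurface which is an $L$-space. The gauge-theoretic input I would use is the following known consequence of gluing in monopole Floer homology: if $X$ is a closed oriented four-manifold with $b^+(X)>1$ which splits as $X=X_1\cup_Y X_2$ along a separating three-manifold $Y$ with $\HMr_{\mathrm{red}}(Y)=0$ (equivalently, $Y$ is an $L$-space), and if $b^+(X_i)>0$ for $i=1,2$, then $SW_X(\spin)=0$ for every $\spin$. The point is that the relative Seiberg--Witten invariant of a piece with $b^+>0$ has no reducible contributions, so it is detected only by the reduced Floer homology of the boundary, and the closed invariant is a pairing of the two relative invariants; when that group vanishes there is nothing to pair. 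Granting this, Theorem~\ref{main} reduces to the geometric problem of finding a closed arithmetic hyperbolic $X^4$ that splits along an embedded $L$-space with positive $b^+$ on both sides.

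Since the hypersurface $Y$ will be totally geodesic, it is $\pi_1$-injective, so $\pi_1(Y)$ embeds in the torsion-free group $\pi_1(X)$; hence $Y$ is aspherical, and being an $L$-space it is a rational homology sphere, so $Y$ is forced to be a closed hyperbolic three-manifold. Thus the object I really need is a closed arithmetic hyperbolic three-manifold $N$ which is (a) an $L$-space and (b) of \emph{simplest type}, i.e.\ commensurable with $\mathrm O(q,\mathcal O_k)$ for a quaternary quadratic form $q$ of signature $(3,1)$ over a totally real field $k$, positive definite at the remaining real places. Property (b) is exactly what permits $N$ to occur as a totally geodesic hypersurface in a hyperbolic four-manifold. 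I expect isolating such an $N$ to be the main obstacle, and I return to it at the end.

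Given $N$, the ambient four-manifold and the splitting come essentially for free. Extend $q$ to the quinary form $Q=q\oplus\langle c\rangle$ with $c\in k$ totally positive; then $Q$ has signature $(4,1)$ at the distinguished place and is positive definite (hence anisotropic) elsewhere, so $\Lambda:=\mathrm O(Q,\mathcal O_k)$ is a cocompact arithmetic lattice in $\mathrm{Isom}(\mathbb H^4)$. It contains the reflection $\rho$ in the hyperplane $\{x_5=0\}$, whose fixed locus is a totally geodesic $\mathbb H^3$ with stabilizer commensurable with the lattice of $N$, and $\rho$ centralizes that stabilizer. Standard arithmetic lattices of simplest type are $\mathrm{GFERF}$ (Bergeron--Haglund--Wise), so the geometrically finite subgroup $\Delta\le\Lambda$ representing $\pi_1(N)$ is separable; intersecting finitely many double-coset conditions (those recording self-intersections of the geodesic hypersurface and of the finitely many other reflection hyperplanes), and intersecting with a $\rho$-conjugate to stay $\rho$-invariant, I obtain a torsion-free finite-index $\rho$-invariant subgroup $\Gamma\le\Lambda$ with $\Delta\le\Gamma$ such that $X_0=\mathbb H^4/\Gamma$ is a closed arithmetic hyperbolic four-manifold, $\bar\rho$ descends to an orientation-reversing involution of $X_0$, and $\mathrm{Fix}(\bar\rho)$ is precisely a copy of $N$. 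Then $X_0=W\cup_N W$ is the double of $W:=X_0/\bar\rho$ along $N$, so $N$ separates; and since $\tau(X_0)=0$ (hyperbolic four-manifolds are conformally flat), Novikov additivity gives $\tau(W)=0$, whence $b^+(W)=\tfrac12 b_2(W)$. Passing to a further $\rho$-invariant finite cover makes $b_2$ of $W$, hence of $X$, arbitrarily large (Chern--Gauss--Bonnet forces $\chi(W)>0$, so Betti numbers grow), so $b^+(X)>1$ and $b^+(W)>0$. Both hypotheses of the gauge-theoretic input hold, hence $SW_X\equiv 0$, and $X$ is a closed arithmetic hyperbolic four-manifold, proving the theorem.

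It remains to address the obstacle, conditions (a)+(b): producing a closed arithmetic hyperbolic three-manifold of simplest type that is also an $L$-space. This is delicate because hyperbolic three-manifolds lie outside the range where the $L$-space conjecture is known, so the $L$-space property cannot be deduced from non-left-orderability of $\pi_1$ or the absence of taut foliations — it must be certified directly. The route I would take is to search for $N$ among three-manifolds carrying a Floer-theoretic input that forces the $L$-space condition, for instance a double branched cover $\Sigma_2(L)$ of a quasi-alternating link (Ozsv\'ath--Szab\'o) or a sufficiently large Dehn surgery on a hyperbolic $L$-space knot, and then to verify through the invariant trace field and quaternion algebra that the resulting manifold is arithmetic of simplest type. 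Marrying the Floer-homological requirement with the arithmetic one in a single manifold is the heart of the argument; everything else above is assembly from existing results.
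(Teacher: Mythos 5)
Your gauge-theoretic reduction is essentially the paper's Proposition~\ref{vanishing}, and your overall plan (find an arithmetic hyperbolic $L$-space of simplest type, embed it geodesically, split, apply the gluing formula) is the same architecture the paper uses. There are, however, two concrete places where your outline either departs from or falls short of a proof.

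First, the genuine gap: you never produce the manifold $N$. You correctly identify that ``marrying the Floer-homological requirement with the arithmetic one in a single manifold is the heart of the argument,'' but then leave it as a search strategy. The paper closes exactly this gap with the Fibonacci manifold $M_5$: it is the double branched cover of the closure of the alternating $3$-braid $(\sigma_1\sigma_2^{-1})^5$, so it is an $L$-space (over $\mathbb{Z}/2$, which suffices by Remark~\ref{ztwocoeff}); since $5\not\equiv 0\bmod 3$ the branch locus is a knot, so $M_5$ is a $\mathbb{Z}/2$-homology sphere; and by Hilden--Lozano--Montesinos and Maclachlan--Reid it is arithmetic of simplest type over $\mathbb{Q}(\sqrt5)$, hence not defined over $\mathbb{Q}$ (so the ambient $4$-manifold is compact). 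Without such an explicit witness the proof does not exist, because, as you say, the $L$-space condition cannot be deduced from any known coarse property of hyperbolic $3$-manifolds.

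Second, your embedding step glosses over the point that makes the condition $H_1(N;\mathbb{Z}/2)=0$ appear in the paper. You pass from $q$ to $Q=q\oplus\langle c\rangle$ and use the reflection $\rho$ to manufacture a separating hypersurface directly, which is a clean alternative to the paper's route (Kolpakov--Reid--Slavich to get a totally geodesic embedding, Bergeron to make it non-separating, then the double-cover trick of Corollary~\ref{nonseparating} to manufacture a separating $L$-space $Y\#\overline{Y}$). But you only arrange for the hyperplane stabilizer in your $\rho$-invariant torsion-free finite-index $\Gamma\le\Lambda$ to be \emph{commensurable} with $\pi_1(N)$; that gives a totally geodesic hypersurface commensurable with $N$, not $N$ itself, and the $L$-space property is not a commensurability invariant. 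To force the fixed locus to be literally $N$ one needs $\pi_1(N)$ to sit inside $\mathrm{PO}(q;k)$, and the standard way to guarantee this is $\pi_1(N)^{(2)}=\pi_1(N)$, i.e. $H_1(N;\mathbb{Z}/2)=0$ --- precisely the hypothesis of Theorem~\ref{embedding} that your outline omits, and precisely one of the reasons $M_5$ (and not $M_6$ or $M_{12}$) is chosen. Your reflection construction also needs the separability/double-coset argument to kill the other components of $\mathrm{Fix}(\bar\rho)$ in the cover, which is plausible via GFERF but should be spelled out; as written it is a sketch rather than a proof.

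One small mislabel: the vanishing of the signature of a hyperbolic $4$-manifold is usually attributed (as in the paper) to Chern's theorem that the Pontryagin forms of a locally symmetric hyperbolic manifold vanish, rather than to conformal flatness per se; the conclusion is the same.
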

In the statement, we consider all possible Seiberg-Witten invariants coming from evaluating elements of the cohomology ring $\Lambda^* H^1(X;\mathbb{Z})\otimes \Z[U]$ of the space of configurations. Theorem \ref{main} is proved by exhibiting hyperbolic $4$-manifolds admitting separating $L$-spaces, using the main result of \cite{KolpakovReidSlavich2018}; under mild additional conditions, this implies that such manifolds admit finite covers with vanishing Seiberg-Witten invariants. Our construction will show in fact that there are infinitely many commensurability classes of arithmetic hyperbolic $4$-manifolds containing representatives with vanishing Seiberg-Witten invariants. Furthermore, by interbreeding as in \cite{GPS}, one can also obtain non-arithmetic examples.

\vspace{0.3cm}
\textit{Acknowledgements.} The first author was funded by a Simons Investigator award. The second author was partially funded by NSF grant DMS-1807242. We thank Alan Reid for comments on an earlier draft.
\vspace{0.3cm}
\section{A vanishing criterion for the Seiberg-Witten invariants}
We discuss a vanishing result for the Seiberg-Witten invariants of four-manifolds containing a separating hypersurface. This is well-known to experts, but the exact form we will need is only implicitly stated in \cite{KM07}, so we will point it out for the reader's convenience. Most of our discussion is based on formal properties of the invariants, and we will follow closely follow the exposition of \cite[Chapter $3$]{KM07}.
\par
Consider a spin$^c$ structure $\spin_X$ on a closed oriented $4$-manifold $X$. For a cohomology class $u\in \Lambda^*H^1(Y;\mathbb{Z})\otimes\mathbb{Z}[U]$, we define the Seiberg-Witten invariant $\m(u|X,\spin_X)$ to be the evaluation of $u$ on the moduli space of solutions to the Seiberg-Witten equations. This is a topological invariant provided that $b_2^+\geq2$. The latter is not a restrictive assumption in our case; hyperbolic $4$-manifolds have signature zero by \cite[Theorem $3$]{Chern} and the Hirzebruch signature formula. Hence
\begin{equation*}
\chi(X)=2(1-b_1(X)+b_2^+(X)).
\end{equation*}
If $b_2^+(X)  \leq 1 $, we would have $\chi(X)\leq 4$; on the other hand, in all known examples of closed orientable hyperbolic $4$-manifolds $\chi\geq16$ \cite{MarstonMacLachlan2005, Long2008} (recall that by Gauss-Bonnet, volume and Euler characteristic are proportional).

We discuss a vanishing criterion for $\m(u|X,\spin_X)$. Let $Y$ be a closed, oriented three-manifold. To this, in \cite[Section 3.1]{KM07} it is defined for each spin$^c$ structure $\spin$ on $Y$ the monopole Floer homology groups fitting in the exact triangle of graded $\Z[U]$-modules
\begin{equation}\label{longexact}
\cdots\longrightarrow\HMb_{*}(Y,\spin) \stackrel{i_*}{\longrightarrow}  \HMt_{*}(Y,\spin)\stackrel{j_*}{\longrightarrow} \HMf_{*}(Y,\spin)\stackrel{p_*}{\longrightarrow} \HMb_{*}(Y,\spin)\longrightarrow\cdots
\end{equation}
where $U$ has degree $-2$ (notice that this convention differs from the one in the four-dimensional literature; this is because we identify $U$ with the corresponding capping operation in homology). The reduced Floer group $\HMr_*(Y,\spin)$ is defined to be the image of $j_*$ in $\HMf_{*}(Y,\spin)$ \cite[Definition 3.6.3]{KM07}.
We will be particularly interested in the case in which $Y$ is a rational homology sphere. In this case we have an identification of $\mathbb{Z}[U]$-modules (up to grading shift) with Laurent series \cite[Proposition 35.3.1]{KM07}
\begin{equation*}
\HMb_{*}(Y,\spin)\cong \Z[U^{-1},U].
\end{equation*}
\begin{definition}[\cite{KMOS}]
We say that a rational homology sphere $Y$ is an $L$-\textit{space} if, up to grading shift, $\HMf_*(Y,\spin)=\Z[U]$ as $\Z[U]$-modules for all spin$^c$ structures $\spin$.
\end{definition}
As the map $p_*$ in equation $(\ref{longexact})$ is an isomorphism in degrees low enough \cite[Section $22.2$]{KM07}, for an $L$-space $\HMr_*(Y,\spin)=0$ for all spin$^c$ structures $\spin$
\begin{prop} \label{vanishing}
Let $X$ be a four-manifold given as $X=X_1\cup_Y X_2$. Suppose that the separating hypersurface $Y$ is an $L$-space (so that in particular $b_1(Y)=0$), and that $b_2^+(X_i)\geq 1$. Then all the Seiberg-Witten invariants of $X$ vanish.
\end{prop}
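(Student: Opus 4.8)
The plan is to compute the invariant $\m(u|X,\spin_X)$ by stretching a long neck along $Y$ and feeding the result through the gluing formalism of \cite[Chapter~3]{KM07}; this is the monopole analogue of the Heegaard Floer fact that an $L$-space hypersurface annihilates the mixed invariant of a closed four-manifold. First one records that the invariant is defined at all: since $Y$ is a rational homology sphere, Mayer--Vietoris gives $H^i(X;\mathbb{R})\cong H^i(X_1;\mathbb{R})\oplus H^i(X_2;\mathbb{R})$ for $i=1,2$, and because $H^1(Y;\mathbb{R})=H^2(Y;\mathbb{R})=0$ the restriction maps identify $H^2(X_i,\partial X_i;\mathbb{R})$ with $H^2(X_i;\mathbb{R})$; the intersection form of $X$ is then the orthogonal sum of the (nondegenerate) forms of the two pieces, so $b_2^+(X)=b_2^+(X_1)+b_2^+(X_2)\geq 2$ and $\m(u|X,\spin_X)$ is a well-defined topological invariant. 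Fix $\spin_X$ and $u$, and let $\spin_Y=\spin_X|_Y$; it is enough to prove $\m(u|X,\spin_X)=0$.

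Now excise an open ball from each side, producing cobordisms $X_1^{\circ}\colon S^3\to Y$ and $X_2^{\circ}\colon Y\to S^3$ with $X\setminus(B^4\sqcup B^4)=X_1^{\circ}\cup_Y X_2^{\circ}$, and with the classes underlying $u$ restricting to the two pieces. Invoking the composition law for cobordism maps together with the known Floer groups of $S^3$ and their distinguished generators, $\m(u|X,\spin_X)$ is identified with a pairing assembled from the cobordism maps induced by $X_1^{\circ}$ and $X_2^{\circ}$ (decorated by the restrictions of $u$), all routed through the monopole Floer groups of $(Y,\spin_Y)$; this is the closed-manifold formula left implicit in \cite[Chapter~3]{KM07}.

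The hypothesis $b_2^+(X_i)\geq 1$ now does the work. It gives $b_2^+(X_i^{\circ})\geq 1$, and a cobordism with positive $b_2^+$ whose pieces have vanishing first Betti number admits no reducible solutions, for a generic metric and perturbation; hence the induced map on the ``bar'' flavor vanishes, $\HMb(X_i^{\circ})=0$. Propagating this through the exact triangle $(\ref{longexact})$---using that $i_*,j_*,p_*$ commute with cobordism maps---forces the relative invariant of each $X_i$ to factor through $\HMr_*(Y,\spin_Y)=\mathrm{im}(j_*)\cong\mathrm{coker}(i_*)$. Consequently the pairing computing $\m(u|X,\spin_X)$ factors through $\HMr_*(Y,\spin_Y)$, which vanishes because $Y$ is an $L$-space. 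Therefore $\m(u|X,\spin_X)=0$, and since $u$ and $\spin_X$ were arbitrary all Seiberg--Witten invariants of $X$ vanish.

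The substantive point, and the one requiring care, is the middle paragraph: \cite{KM07} states the closed-manifold gluing formula only implicitly, so one must write it out---tracking the two $S^3$ ends and their canonical classes, carrying along the decorations coming from $u$ (the $U$- and $\mu$-type operations on cobordisms), and matching which Floer flavor of $(Y,\spin_Y)$ is fed by $X_1^{\circ}$ and which by $X_2^{\circ}$, with the correct grading shifts. Granting that, the vanishing $\HMb(X_i^{\circ})=0$ is the familiar consequence of $b_2^+>0$ (and $b_1=0$ on the pieces), and the reduction to $\HMr$ is a diagram chase; no further difficulty is anticipated.
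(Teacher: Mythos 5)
Your proposal is correct and follows essentially the same approach as the paper's own proof: cut $X$ along $Y$ into two punctured cobordisms to $S^3$, use $b_2^+\geq 1$ on each piece to kill the ``bar'' cobordism maps, deduce via the exact triangle that each relative invariant lands in $\HMr_*(Y,\spin)$ (the paper phrases this as $\ker(p_*)$, you as $\mathrm{im}(j_*)\cong\mathrm{coker}(i_*)$ --- the same subgroup by exactness), and conclude from the pairing formula and $\HMr_*(Y,\spin)=0$. The only additions you make are the explicit check that $b_2^+(X)\geq 2$ via Mayer--Vietoris (the paper handles this separately in the surrounding text for hyperbolic manifolds) and a slightly more informal rendering of the gluing formula; neither changes the logical structure, and both relative invariants do land in the reduced groups exactly as in the paper's commutative-diagram argument.
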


\begin{rmk}
A simpler to state vanishing criterion is the following: if $b_1(X)=0$ and $b_2^+(X)$ is even, then all Seiberg-Witten invariants are zero. In fact, under this assumption all Seiberg-Witten moduli spaces are odd dimensional \cite[Theorem 1.4.4]{KM07}, while all classes in our cohomology ring are even dimensional. On the other hand, we are not aware of examples of hyperbolic $4$-manifolds satisfying these conditions. 
\end{rmk}

\begin{proof}[Proof of Proposition \ref{vanishing}]All we need to do is to discuss the results of \cite[Chapter $3$]{KM07} while keeping track of the specific spin$^c$ structures. First of all, notice that as $b_1(Y)=0$, a spin$^c$ structure $\spin_X$ on $X$ is determined by the restrictions $\spin_i=\spin_X\lvert_{X_i}$. This follows from the injectivity of the map $H^2(X;\Z)\rightarrow H^2(X_1;\Z)\oplus H^2(X_2;\Z)$ in the Mayer-Vietoris sequence, and the fact the these groups classify spin$^c$ structures. Let $\spin=\spin_X\lvert_Y$.
It is sufficient to show that $\m(u|X,\spin_X)=0$ for classes $u=u_1 u_2$ where $u_i$ is a cohomology class in the configuration space of $X_i$. Recall from \cite[Section 3.4]{KM07} that a cobordism $W$ from $Y_0$ to $Y_1$ induces a map in homology fitting with the exact triangle; furthermore, if $b_2^+(W)\geq 1$, we have that $\HMb_{*}(u|W,\spin)=0$ \cite[Proposition $3.5.2$]{KM07}. Given data as above, we can define the relative invariant $\psi_{(u_1|X_1,\spin_1)}\in \HMf_{*}(Y,\spin)$ obtained as follows: let $W_1$ be the cobordism obtained from $X_1$ by removing a ball, and consider the induced map
\begin{equation*}
\HMf_{*}(u_1|W_1,\spin_1):\HMf_{*}(S^3)=\Z[U]\rightarrow \HMf_{*}(Y,\spin).
\end{equation*}
Then $\psi_{(u_1|X_1,\spin_1)}=\HMf_{*}(u_1|W_1,\spin_1)(1)$. On the other hand, we have the commutative diagram
\begin{center}
\begin{tikzpicture}
\matrix (m) [matrix of math nodes,row sep=2.5em,column sep=2em,minimum width=2em]
  {
 \HMf_{*}(S^3)  & \HMb_{*}(S^3)\\
\HMf_{*}(Y,\spin)  & \HMb_{*}(Y,\spin)\\};
  \path[-stealth]
  (m-1-1) edge node [above]{$p_*$} (m-1-2)
   (m-2-1) edge node [above]{$p_*$} (m-2-2) 
  (m-1-1) edge node [left]{$\HMf_{*}(u_1|W_1,\spin_1)$} (m-2-1) 
    (m-1-2) edge node [right]{$\HMb_{*}(u_1|W_1,\spin_1)$} (m-2-2) 
  ;
\end{tikzpicture}
\end{center}
and as $b_2^+(W_1)\geq 1$, the vertical map on the right vanishes; in turn, this implies that $\psi_{(u_1|X_1,\spin_1)}\in\mathrm{ker}(p_*)=\HMr_*(Y,\spin)$. Similarly, using the map induced in cohomology by $W_2$, we obtain an element $\psi_{(u_2|X_2,\spin_2)}\in \HMr_{*}(-Y,\spin)$; this last group is by Poincar\'e duality identified with $\HMr^{*}(Y,\spin)$. The general gluing theorem in \cite[Equation $3.22$]{KM07}, when keeping track of the spin$^c$ structures, is then
\begin{equation*}
\m(u|X,\spin_X)=\langle \psi_{(u_1|X_1,\spin_1)},\psi_{(u_2|X_2,\spin_2)}\rangle,
\end{equation*}
where the angular brackets denote the natural pairing
\begin{equation*}
\HMr_{*}(Y,\spin)\times \HMr^{*}(Y,\spin)\rightarrow \Z.
\end{equation*}
In our assumptions, the group $\HMr_{*}(Y,\spin)$ vanishes, so this pairing is zero, and the result follows.
\end{proof}
\begin{rmk}\label{ztwocoeff}
In fact, for our purposes of understanding gluing formula for Seiberg-Witten invariants, it suffices to consider the reduced invariants with rational coefficients $\HMr_*(Y,\spin;\mathbb{Q})$. In particular, the previous discussion only relies on the vanishing of this group. Furthermore, via the universal coefficients theorem, this is implied by the vanishing of $\HMr_*(Y,\spin;\mathbb{Z}/2\mathbb{Z})$, so that our main result actually applies for the reduced Floer homology group with $\mathbb{Z}/2\mathbb{Z}$-coefficients.
\end{rmk}

Our examples will be based on the following.
\begin{cor} \label{nonseparating}
Suppose $X$ is a $4$-manifold with $b_2^+\geq1$ which admits an embedded non-separating $L$-space $Y$. Then $X$ admits infinitely many covers which have all vanishing Seiberg-Witten invariants.
\end{cor}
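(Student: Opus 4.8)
The plan is to realize the desired covers as cyclic covers dual to $Y$, inside which several parallel copies of $Y$ form a separating decomposition of exactly the kind to which Proposition \ref{vanishing} applies.

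Since $Y$ is an $L$-space it is a rational homology $3$-sphere, hence orientable, hence (as $X$ is oriented) coorientable with trivial normal bundle; being non-separating, its Poincar\'e dual in $H^1(X;\Z)$ is nonzero, so writing it as $d\alpha$ with $\alpha$ primitive we obtain a surjection $\phi\colon\pi_1(X)\twoheadrightarrow\Z$. For each $n\ge 2$ let $\pi_n\colon X_n\to X$ be the connected $n$-fold cyclic cover classified by $\pi_1(X)\xrightarrow{\phi}\Z\to\Z/n$. The $X_n$ are pairwise distinct finite covers of $X$ (they have distinct degrees), so it suffices to prove that all the Seiberg--Witten invariants of each $X_n$ vanish.

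First I would record the structure of $X_n$. A loop lying on $Y$ can be pushed off $Y$ along the (trivial) normal bundle, hence has zero algebraic intersection number with $Y$; thus $\phi$ vanishes on $\pi_1(Y)$, and $Y$ lifts to $n$ disjoint parallel copies $Y^{(1)},\dots,Y^{(n)}$ in $X_n$, cyclically permuted by the deck group. Letting $W$ denote the compact $4$-manifold with $\partial W=Y\sqcup Y$ obtained by cutting $X$ open along $Y$, this says precisely that $X_n$ is the cyclic union $W_1\cup_Y W_2\cup_Y\cdots\cup_Y W_n$ (here $Y^{(i)}$ is the copy sitting between $W_i$ and $W_{i+1}$, indices mod $n$). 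Removing $Y^{(n)}$ and $Y^{(k)}$ for any $1\le k\le n-1$ splits $X_n$ as $A\cup_{(Y^{(n)}\sqcup Y^{(k)})}B$, where $A=W_1\cup_Y\cdots\cup_Y W_k$ and $B=W_{k+1}\cup_Y\cdots\cup_Y W_n$ are chains of copies of $W$ with boundary $Y\sqcup Y$.

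It remains to verify the hypotheses of Proposition \ref{vanishing} for this decomposition. First, the separating hypersurface is now $Y^{(n)}\sqcup Y^{(k)}$, a disjoint union of two $L$-spaces; the proof of Proposition \ref{vanishing} goes through unchanged in this situation, since the reduced Floer homology of a disjoint union of $L$-spaces still vanishes. The substantive point, and the step I expect to be the \emph{main obstacle}, is to check that $b_2^+(A)\ge1$ and $b_2^+(B)\ge1$. Here I would exploit that $Y$ is a rational homology sphere: by a Mayer--Vietoris argument the intersection form is additive under gluing two $4$-manifolds with boundary along $Y$ (and is unaffected by attaching a collar $Y\times I$), so $b_2^+(W)=b_2^+(X)\ge1$ and, by induction on the length of the chain, $b_2^+$ of a chain of $m$ copies of $W$ equals $m\,b_2^+(W)\ge1$; in particular $b_2^+(A),b_2^+(B)\ge1$. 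Proposition \ref{vanishing} then shows that all Seiberg--Witten invariants of $X_n$ vanish, and letting $n$ range over all integers $\ge2$ completes the proof.
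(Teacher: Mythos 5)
Your cyclic-cover construction is a concrete way to get infinitely many covers (the paper's own proof uses the double cover and then says, a bit vaguely, that one can modify the construction to get infinitely many), and your Mayer--Vietoris argument for $b_2^+(A),b_2^+(B)\ge 1$ is correct. However, the step you wave through is actually the one that needs care: your separating hypersurface $Y^{(n)}\sqcup Y^{(k)}$ is \emph{disconnected}, and Proposition~\ref{vanishing}, together with the gluing theorem from \cite[Eq.~3.22]{KM07} on which it rests, is formulated for a connected $Y$. The monopole Floer package in \cite{KM07} defines the groups $\HMf_*$, $\HMb_*$, $\HMt_*$ only for connected $3$-manifolds (the blow-up at reducibles and the $\Z[U]$-module structure are set up in that setting), so the phrase ``the reduced Floer homology of a disjoint union of $L$-spaces still vanishes'' is not something you can simply read off; there is no readily quotable K\"unneth statement for $\HMr_*$ of a disjoint union, and the claim that the proof ``goes through unchanged'' is not justified.

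The paper sidesteps exactly this issue: after cutting $\tilde X$ into two cobordisms $W_1,W_2$ with boundary $Y\sqcup \overline Y$, it removes a tubular neighborhood $T$ of a properly embedded arc $\gamma$ joining the two boundary components, so that the new separating hypersurface is the \emph{connected} manifold $Y\#\overline Y$. One then invokes the fact that a connected sum of $L$-spaces is an $L$-space (\cite[Section 4]{Lin17}) and applies Proposition~\ref{vanishing} as stated. Your proof can be repaired the same way: inside, say, $W_k\subset A$ remove a tube joining $Y^{(n)}$ (equivalently $Y^{(0)}$) to $Y^{(k)}$ through the chain; this replaces $Y^{(n)}\sqcup Y^{(k)}$ by $Y\#\overline Y$ and leaves your $b_2^+$ computation intact, since removing a $1$-handle's worth of $4$-manifold does not change $b_2^+$. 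With that fix your argument matches the paper's, with the modest bonus that the family of $n$-fold cyclic covers makes the ``infinitely many'' conclusion explicit.
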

\begin{proof}
Consider the double cover $\tilde{X}$ of $X$ formed by gluing together two copies $W_1$ and $W_2$ of the cobordism from $Y$ to $Y$ obtained by cutting $X$ along $Y$, see Figure \ref{coverdouble}. Consider a properly embedded path $\gamma\subset W_1$ between the two copies of $Y$, and denote by $T$ its tubular neighborhood. We then have the decomposition $X=(W_1\setminus T)\cup(W_2\setminus T)$, where the two manifolds are glued along a copy of $Y\# \overline{Y}$; here $\overline{Y}$ denotes $Y$ with the opposite orientation. The latter is an $L$-space \cite[Section $4$]{Lin17}, and both $W_1\setminus T$ and $W_2\setminus T$ have $b_2^+\geq1$, so we conclude. Of course, we can modify this construction to provide infinitely many examples.
\end{proof}

\begin{figure}
  \centering
\def\svgwidth{\textwidth}
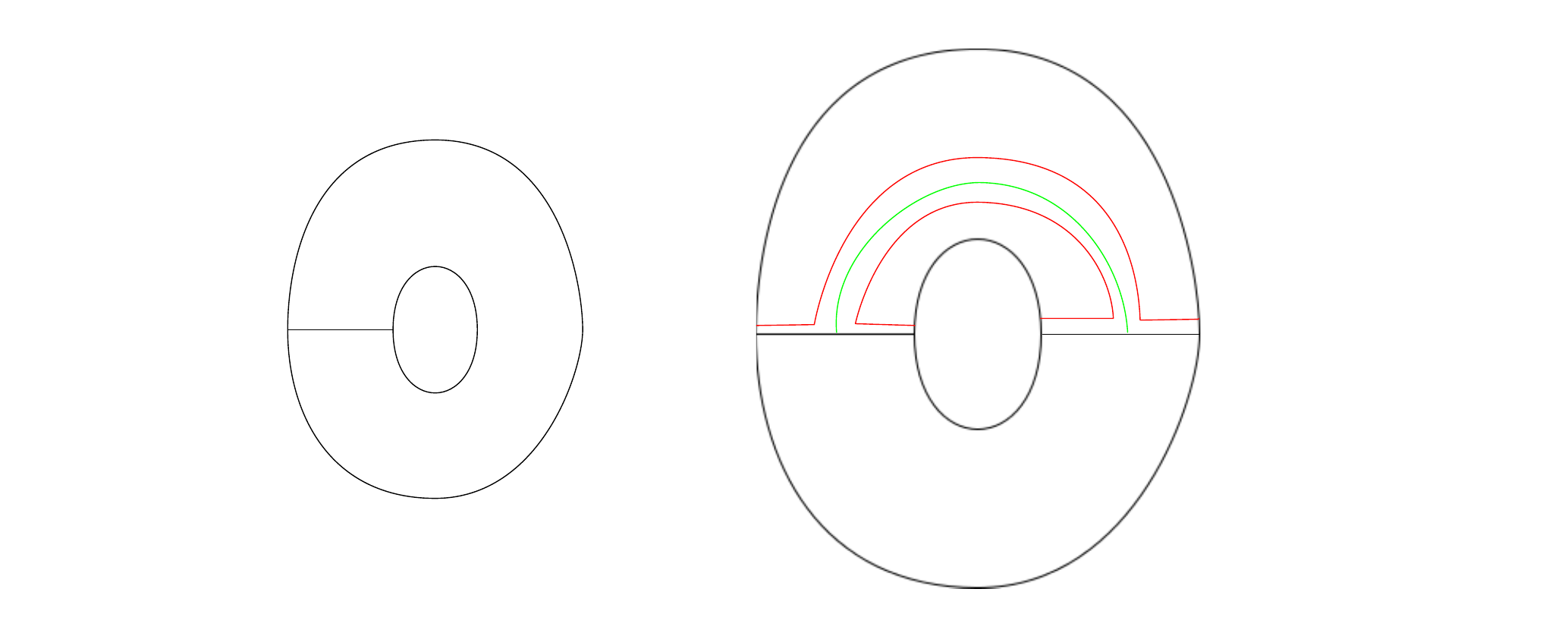
\caption{A double cover of $X$ contains a separating $L$-space.}
\label{coverdouble}
\end{figure}

\vspace{0.3cm}
\section{Geodesic hypersurfaces in arithmetic hyperbolic 4-manifolds}

In this section, we will discuss various properties of arithmetic hyperbolic lattices.
For the general case of arithmetic lattices, see \cite{Witte2015}, and for the
3-dimensional case, consult \cite{MR03}. We first review the
definitions and construction of arithmetic manifolds of simplest type. 

\begin{definition}
Let $G$ be a group, $H_1, H_2  \leq G$ be subgroups.
We say that $H_1$ is {\it commensurable in $G$} with $H_2$ if $[H_1:H_1\cap H_2] < \infty, [H_2:H_1\cap H_2] <\infty$. 
\end{definition} 
 
\begin{definition}
Consider a non-degenerate quadratic form $q:k^{n+1}\to k$ for   a totally real number field 
$k\subset \mathbb{R}$ with ring of integers $\mathcal{O}_k$. Assume that $q$ is Lorentzian, i.e. has signature $(n,1)$ over $\mathbb{R}$. 
Moreover, for each non-trivial embedding $\sigma: k\to \mathbb{R}$, assume
that $\sigma\circ q$ is positive definite. Let $O(q;k)$ denote the group
of matrices preserving $k$, i.e. linear transforms $A:k^{n+1}\to k^{n+1}$ such that $q\circ A = q$. 
Then the subgroup $O(q;\mathcal{O}_k) \subset O(q;k) \subset O(q;\mathbb{R})$
is a lattice, and acts discretely on the hyperboloid of two sheets $\mathcal{H}=\{ x\in \mathbb{R}^{n+1} | q(x) = -1\}$. Up to isometry, the group $O(q;\mathbb{R}) \cong O(n,1;\mathbb{R})$, the orthogonal
group associated to the quadratic form $-x_0^2+x_1^2+\cdots+x_n^2$. 
Projectivizing, $PO(q;\mathcal{O}_k)$ acts discretely on hyperbolic space $\mathbb{H}^n$,
which is the quotient of the hyperboloid $\mathcal{H}$ by the antipodal map. 
A hyperbolic orbifold $\mathbb{H}^n/\Gamma$ is said to be of {\it simplest type} if $\Gamma$ is commensurable (up to conjugacy)
with $PO(q;\mathcal{O}_k)$ for some such $q$.
\end{definition}

Example: Let $q_n: k^{n+1} \to k$ be defined by $q_n(x_0,x_1,\ldots, x_n)= -\sqrt{2}x_0^2+x_1^2+\cdots+x_n^2$ over the field $k=\mathbb{Q}(\sqrt{2})$. Let $\sigma :k\to k$ be the Galois automorphism
induced by $\sigma(\sqrt{2})= -\sqrt{2}$. Then $\sigma\circ q_n (x_0,\ldots,x_n) =\sqrt{2}x_0^2+x_1^2+\cdots+x_n^2$
is positive definite. Hence $PO(q_n; \mathbb{Z}[\sqrt{2}])$ is a discrete arithmetic lattice
acting on $\mathbb{H}^n$. See \cite[\textsection 6.4]{Witte2015}.

\begin{definition}
Let $G$ be a group. Then $G^{(2)} = \langle g^2 | g\in G\rangle$. \end{definition}

If $G$ is finitely
generated, then $G^{(2)}$ is finite-index in $G$, and $G/G^{(2)}$ is an elementary
abelian 2-group.

\begin{theorem}\label{embedding} Let $M^3$ be an orientable hyperbolic arithmetic 3-manifold of simplest type with $H_1(M;\mathbb{Z}/2) = 0$ and not defined over $\mathbb{Q}$. 
Then $M$ embeds as a totally geodesic non-separating submanifold in a compact arithmetic hyperbolic 4-manifold. 
\end{theorem}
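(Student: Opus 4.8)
The plan is to realize $M$ from its quadratic form, to pass to dimension four by adjoining a single positive variable, and then to make the resulting totally geodesic copy of $M$ both embedded and non-separating by combining the separability technology behind \cite{KolpakovReidSlavich2018} with a homological doubling trick. The hypothesis that $M$ is not defined over $\mathbb{Q}$ will be used only to keep the ambient $4$-manifold \emph{compact}, and the hypothesis $H_1(M;\mathbb{Z}/2)=0$ only to make the copy of $M$ \emph{non-separating}.

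First I would write $M=\mathbb{H}^3/\Gamma$ with $\Gamma$ commensurable, up to conjugacy, with $PO(q;\mathcal{O}_k)$ for a Lorentzian form $q$ over a totally real field $k$ that is positive definite at every non-identity real embedding; since $M$ is not defined over $\mathbb{Q}$ we have $k\neq\mathbb{Q}$, so such an embedding $\sigma$ exists. Fix a totally positive $a\in k^{\times}$ and set $q'=q\perp\langle a\rangle$, of signature $(4,1)$ over $k$. Then $\sigma\circ q'=(\sigma\circ q)\perp\langle\sigma(a)\rangle$ is positive definite, hence anisotropic over $\mathbb{R}$, so by Hasse--Minkowski $q'$ is anisotropic over $k$; therefore $PO(q';\mathcal{O}_k)$ is a \emph{cocompact} arithmetic lattice of simplest type in $\mathrm{Isom}(\mathbb{H}^4)$, containing $PO(q;\mathcal{O}_k)$ as the stabilizer of the totally geodesic $\mathbb{H}^3=\{x_4=0\}$. (Over $\mathbb{Q}$ this fails, since a non-degenerate indefinite rational form in five variables is always isotropic, forcing the ambient $4$-manifold to be non-compact.) Running the argument of \cite{KolpakovReidSlavich2018} from this particular $q'$ --- which uses subgroup separability for arithmetic lattices of simplest type (Bergeron--Haglund--Wise) together with Selberg's lemma --- then produces a torsion-free lattice $\Lambda$ commensurable with $PO(q';\mathcal{O}_k)$, containing $\Gamma$ as the stabilizer of $\mathbb{H}^3$, and such that $M$ embeds as a totally geodesic hypersurface in the closed orientable arithmetic hyperbolic $4$-manifold $W_0:=\mathbb{H}^4/\Lambda$; here $H^1(M;\mathbb{Z}/2)=0$ forces the normal line bundle of $M$ to be trivial, so $M$ is two-sided and survives the passage to the orientation cover of $W_0$.

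Now I would make this copy non-separating. If $M$ is already non-separating in $W_0$ there is nothing to do, so assume $W_0=A\cup_M B$. Because $H^1(M;\mathbb{Z}/2)=0$, \emph{every} double cover of $A$ or of $B$ restricts to the trivial double cover over the boundary $M$; equivalently $\pi_1 M\leq\pi_1(A)^{(2)}$ and $\pi_1 M\leq\pi_1(B)^{(2)}$, and in particular $\pi_1 M\leq\pi_1(W_0)^{(2)}$, so $M$ lifts and stays embedded in the cover of $W_0$ associated with $\pi_1(W_0)^{(2)}$. Replacing $W_0$ by that cover, and then if necessary by a further finite cover --- possible because $\pi_1 A$ and $\pi_1 B$ are infinite, residually finite and non-elementary, hence admit finite-index subgroups with non-trivial first $\mathbb{Z}/2$-homology --- I may assume $H_1(A;\mathbb{Z}/2)\neq0\neq H_1(B;\mathbb{Z}/2)$. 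Choose connected double covers $\widehat A\to A$ and $\widehat B\to B$; by the observation above each has boundary two disjoint copies of $M$. Gluing $\widehat A$ to $\widehat B$ along their boundary copies of $M$ so as to cover $W_0=A\cup_M B$ gives a closed $4$-manifold $W=\widehat A\cup\widehat B$ which is a degree-two cover of $W_0$ (the two double covers are compatible over $M$ because both are trivial there), hence a compact arithmetic hyperbolic $4$-manifold, hyperbolic as it is assembled along totally geodesic boundary. Cutting $W$ along one of the two copies of $M$ leaves the connected manifold $\widehat A\cup_M\widehat B$, so that copy of $M$ is totally geodesic, embedded and non-separating in $W$, as required.

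The main obstacle is this last step: one must produce enough $\mathbb{Z}/2$-homology on the two complementary pieces to run the doubling \emph{without} spoiling the embeddedness of $M$, which ultimately rests on subgroup separability --- and it is exactly $H_1(M;\mathbb{Z}/2)=0$ that makes the two auxiliary double covers compatible along $M$, so that the assembled manifold is again a genuine (hyperbolic, arithmetic) cover of $W_0$. By contrast, compactness of the $4$-manifold is the easy point, handed to us by Hasse--Minkowski the moment $k\neq\mathbb{Q}$.
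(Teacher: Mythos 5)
Your overall plan — realize $\Gamma$ from a Lorentzian form $q$ over $k$, adjoin a totally positive fifth variable, use Hasse--Minkowski to get anisotropy and hence compactness when $k\neq\mathbb{Q}$, then invoke the separability machinery of \cite{KolpakovReidSlavich2018} to embed $M$ totally geodesically, and finally fix non-separation — is the same broad strategy the paper uses. However, there is a genuine gap in how you allocate the hypotheses. You assert that $H_1(M;\mathbb{Z}/2)=0$ is used \emph{only} to make $M$ non-separating, and you claim in the embedding step that the KRS argument ``produces a torsion-free lattice $\Lambda$ \ldots containing $\Gamma$.'' But $\Gamma$ being commensurable with $PO(q;\mathcal{O}_k)$ does not place $\Gamma$ itself inside $PO(q;k)$; the general statement (Vinberg/Millson, used in \cite{KolpakovReidSlavich2018}) is that only $\Gamma^{(2)}\leq PO(q;k)$, and accordingly \cite[Theorem 1.1(2)]{KolpakovReidSlavich2018} embeds $\mathbb{H}^3/\Gamma^{(2)}$, not $\mathbb{H}^3/\Gamma$. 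The role of $H_1(M;\mathbb{Z}/2)=0$ in the paper is precisely to guarantee $\Gamma^{(2)}=\Gamma$, so that the embedded manifold really is $M$. Without noticing this you cannot conclude that it is $M$, rather than some finite cover of $M$, that embeds; so the first half of your proof is incomplete as written.

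For the non-separating step, your approach does differ from the paper's: the paper simply cites \cite[Theorem 2]{Bergeron2000}, which is a subgroup-separability statement producing a finite cover in which the lift of $M$ is non-separating. Your doubling trick — using $H^1(M;\mathbb{Z}/2)=0$ to see that any double covers of the two sides restrict trivially over $M$, then gluing a connected double cover of each side to obtain a degree-two cover of $W_0$ in which $M$ is non-separating — is a nice and more hands-on observation. But the preparatory step, ``pass to a further finite cover so that $H_1(A;\mathbb{Z}/2)\neq0\neq H_1(B;\mathbb{Z}/2)$ while keeping $M$ lifted and embedded,'' is itself a nontrivial separability assertion (one needs a finite cover of the closed $W_0$, not just abstract finite-index subgroups of $\pi_1 A$ and $\pi_1 B$, compatible with the lift of $M$), and you do not supply an argument for it. Once you are willing to use that kind of machinery, Bergeron's theorem already delivers the conclusion directly. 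So: same skeleton for the embedding (with the $\Gamma^{(2)}$ role of $H_1(M;\mathbb{Z}/2)=0$ missing), and a genuinely different but incomplete route for non-separation.
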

\begin{proof}
Let $\Gamma = \pi_1(M) \leq Isom^+(\mathbb{H}^3)$. Since $M$ is a $\mathbb{Z}/2\mathbb{Z}$-homology sphere, $\Gamma^{(2)}=\Gamma$.  By \cite[Theorem 1.1 (2)]{KolpakovReidSlavich2018},
$\mathbb{H}^n/\Gamma^{(2)}\cong M$ embeds as a totally geodesic submanifold of a closed orientable hyperbolic 4-manifold $W$ (the fact that $M$ is not defined over $\mathbb{Q}$ implies that $W$ is compact). Briefly, this is proved by showing that $\Gamma^{(2)}\leq PO(q;k)$ so that it is commensurable with $PO(q;\mathcal{O}_k)$ for some Lorentzian quadratic form $q: k^4\to k$. Taking the quadratic form $Q_d=dy^2+q, d \in \mathbb{N}$,  we get an embedding of $PO(q;\mathcal{O}_k) < PO(Q_d;\mathcal{O}_k) < PO(Q_d;\mathbb{R})\cong PO(4,1;\mathbb{R})$. Then a subgroup separability result allows one to embed $\Gamma$ in a torsion-free lattice $\Lambda < PO(Q_d; k)$ so that $W=\mathbb{H}^4/\Lambda$. 
By \cite[Theorem 2]{Bergeron2000}, there exists a further finite-sheeted cover $\tilde{W}\to W$, and a lift $M\to \tilde{W}$ such that the lift of $M$ is non-separating in $\tilde{W}$. This is achieved again by a subgroup separability result. 
\end{proof}

\vspace{0.3cm}
\section{Examples}
The \textit{Fibonacci manifold} $M_n$ is the cyclic branched $n$-fold cover over the figure-eight knot. For $n=2$ we obtain a lens space, for $n=3$ the Hantzche-Wendt manifold, while for $n\geq 4$ it is hyperbolic.
\par
For every $n$ the Fibonacci manifold $M_n$ is an $L$-space. To see this, recall from \cite{VM96} that $M_n$ is the branched double cover over the closure of the $3$-braid $(\sigma_1\sigma_2^{-1})^n$ (see Figure \ref{branch}), which is alternating. Using the surgery exact triangle \cite{KMOS}, these can be shown to be $L$-spaces as in the context of Heegaard Floer homology \cite{OSz05}, with the caveat that in our setting the computation only holds with coefficients in $\mathbb{Z}/2\mathbb{Z}$; on the other hand this is enough for our purposes, see Remark \ref{ztwocoeff}. Notice also that for $n\neq 0$ modulo $3$, the closure is a knot, so that $M_n$ is a $\mathbb{Z}/2\mathbb{Z}$-homology sphere.

\begin{figure}
  \centering
\def\svgwidth{0.8\textwidth}
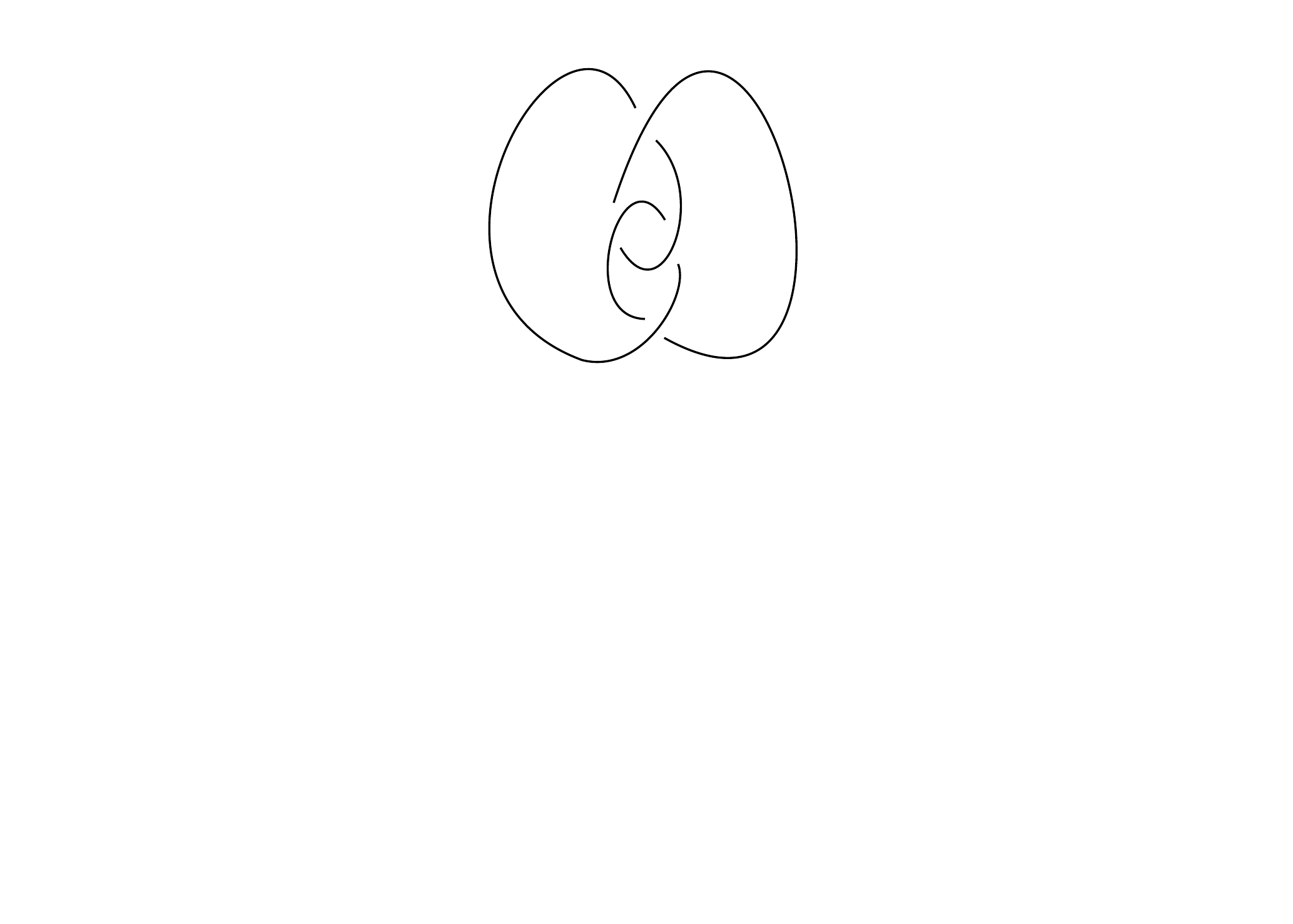
    \caption{In this picture, the numbers indicate the branching. The top picture has an obvious order $2$ rotational symmetry along the axis depicted by the big dot. The quotient is the link in $S^3$ depicted on the bottom left. This is isotopic to the link on the right (which is topologically the same, but with different branchings). Now, the curve with branching $2$ is the $3$-braid $\sigma_1\sigma_2^{-1}$, so that taking the $n$-fold branched cover along the other component we see that $M_n$ is the branched double cover over $(\sigma_1\sigma_2^{-1})^n$.}
    \label{branch}
\end{figure}

By \cite{HildenLozanoMontesinos1995}, $M_n$ is arithmetic when $n= 4, 5, 6, 8, 12$. Of these examples, $n=4,5,8$ are $\mathbb{Z}/2\mathbb{Z}$ homology spheres. The only one of these three which is simplest type and not defined over $\mathbb{Q}$ is $M_5$. This is example \cite[13.7.4(a)(iii)]{MR03}, which has invariant trace field a quartic field. As they point out, this is commensurable with a tetrahedral group \cite[13.7.4(a)(i)]{MR03} which is simplest type and not defined over $\mathbb{Q}$ by \cite[Theorem 1]{MacLachlanReid89}. It is defined over a quadratic form over the field $\mathbb{Q}(\sqrt{5})$.


Thus, by Theorem \ref{embedding}, $M_5$ has a non-separating embedding into a closed orientable hyperbolic 4-manifold $W$. 
We may assume that $\chi(W) > 2$ (by passing to a 2-fold cover if needed), and hence $b_2^+(W) >1$. Thus by Corollary \ref{nonseparating}, these embed into a hyperbolic 4-manifold with vanishing Seiberg-Witten invariants. This completes the proof of Theorem \ref{main}.

\begin{rmk}
One may also get other examples by cutting and doubling or using the interbreeding technique of Gromov-Piatetskii-Shapiro to get non-arithmetic examples. One can isometrically embed this $L$-space $M_5$ in infinitely many incommensurable hyperbolic 4-manifolds via the method of \cite{KolpakovReidSlavich2018} by taking the forms $Q_1$ and $Q_d$ in the proof of Theorem \ref{embedding} so that $d$ is square-free in $k=
\mathbb{Q}(\sqrt{5})$, and then cut and cross-glue to give a  closed non-arithmetic manifold containing $M_n$ as a non-separating hypersurface \cite[\textsection 2.9]{GPS}. 
\end{rmk}

\section{Conclusion}
We conclude by pointing out some natural questions related to our method.
\begin{enumerate}
\item Can one find an explicit hyperbolic example (such as the Davis manifold or the manifolds described in \cite{Long2008}) that satisfies the properties of Proposition \ref{vanishing}? Recall that the Davis manifold has $b_1=24$ and $b_2^+=36$ \cite{Davis}, so that all moduli spaces have odd dimension. 
\item Can one embed any orientable hyperbolic 3-manifold of simple type as a geodesic hypersurface in an orientable hyperbolic 4-manifold? More generally, can one show that orientable hyperbolic 3-manifolds have quasiconvex embeddings into orientable hyperbolic 4-manifolds? 
\item Can one use bordered Floer theory to compute the Seiberg-Witten invariants of Haken hyperbolic 4-manifolds (in the sense of \cite{FoozwellRubinstein})? 
\item Which commensurability classes of compact hyperbolic $3$-manifold of the simplest type contain $L$-spaces? Note that it is not even known if there are infinitely many commensurability classes of arithmetic rational homology 3-spheres. 
\end{enumerate}

\vspace{0.3cm}


\end{document}